\title[Chromatic numbers of iterated arc graphs]
{Iterated arc graphs}
\author[Rorabaugh]{Danny Rorabaugh}
\address[D. Rorabaugh, C. Tardif, and D. Wehlau]{Department of Mathematics and Statistics,
Queen's University, Kingston}
\author[Tardif]{Claude Tardif}
\address[C. Tardif, D. Wehlau, and I. Zaguia]{Department of Mathematics and Computer Science,
Royal Military College of Canada, Kingston}
\author[Wehlau]{David Wehlau}
\author[Zaguia]{Imed Zaguia}
\thanks{Research supported by grants from NSERC and CDARP}
\newtheorem{proposition}{Proposition}
\newtheorem{theorem}[proposition]{Theorem}
\newtheorem{lemma}[proposition]{Lemma}
\newtheorem{corollary}[proposition]{Corollary}
\newfont{\Bb}{msbm10 scaled\magstep1}
\newcommand{\binomial}[2]{\genfrac{(}{)}{0pt}{}{#1}{#2}}
\newcommand{\nd}{\mathcal{N}}
\begin{document}

\begin{abstract} 
The arc graph $\delta(G)$ of a digraph $G$ is the digraph with
the set of arcs of $G$ as vertex-set, where the arcs of
$\delta(G)$ join consecutive arcs of $G$.
In 1981, Poljak and R\"{o}dl characterised the chromatic number
of $\delta(G)$ in terms of the chromatic number of $G$ when
$G$ is symmetric (i.e., undirected).
In contrast, directed graphs with equal chromatic numbers
can have arc graphs with distinct chromatic numbers.
Even though the arc graph of a symmetric graph is not symmetric,
we show that the chromatic number of the iterated arc graph
$\delta^k(G)$ still only depends on the chromatic number of $G$
when $G$ is symmetric.
\end{abstract}

\maketitle

{\small \noindent{\bf Keywords: Arc graphs, chromatic numbers, free distributive lattices, Dedekind numbers}
\newline \noindent {\em AMS Subject Classification: Primary 05C15, Secondary 06A07}}
\smallskip

\section{Introduction}
The arc graph $\delta(G)$ of a digraph $G$ is the digraph with
the set $A(G)$ of arcs of $G$ as vertex-set, where the arcs of 
$\delta(G)$ join consecutive arcs of $G$. The iterated arc graphs
$\delta^{k}(G)$, $k \geq 1$ are defined recursively by
$\delta^{k}(G) = \delta(\delta^{k-1}(G))$. However it is possible 
to interpret $\delta^k(G)$ in terms of sequences of vertices of $G$:
\begin{eqnarray*}
V(\delta^{k}(G)) & = & \{ (u_1, \ldots, u_k) \in V(G)^k \mid \\&&\quad 
(u_i,u_{i+1}) \in A(G) \mbox{ for } i = 1, \ldots, k-1 \}, \\
A(\delta^{k}(G)) & = & \{ ((u_0, \ldots, u_{k-1}),(u_1, \ldots, u_k)) \mid \\&& \quad
(u_0, \ldots, u_{k-1}), (u_1, \ldots, u_k) \in V(\delta^{k}(G)) \}.
\end{eqnarray*}
In particular, the iterated arc graphs of complete graphs with loops are
the well-known de Bruijn graphs. The iterated arc graphs of transitive 
tournaments are a folklore construction of graphs with large chromatic 
numbers and no short odd cycles (see~\cite{hellnesetril}). 

We will be investigating chromatic numbers of iterated arc graphs.
Here, the chromatic number of a digraph is defined as the minimum
number of colours needed to colour its vertices so that the 
endpoints of an arc have different colours. Thus, the direction
of an arc has no effect on the chromatic number. 
In~\cite{entringerharner}, Entriger and Harner give the following 
relations between the chromatic number of a digraph and that of
its arc graph. 
\begin{theorem}[\cite{entringerharner}] \label{ea} \mbox{}
\begin{itemize}
\item[(i)] If $\chi(\delta(G)) \leq n$, then $\chi(G) \leq 2^n$.
\item[(ii)] If $\chi(G) \leq \binomial{n}{\lfloor n/2 \rfloor}$,
then $\chi(\delta(G)) \leq n$.
\end{itemize}
\end{theorem}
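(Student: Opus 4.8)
The plan is to prove the two implications separately, in both cases exploiting an encoding of colours by subsets of $\{1, \ldots, n\}$ together with the adjacency rule of $\delta(G)$, which joins an arc $(u,v)$ to an arc $(v,w)$ sharing the middle vertex $v$.

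For part (i), suppose $c : A(G) \to \{1, \ldots, n\}$ is a proper $n$-colouring of $\delta(G)$. I would colour each vertex $v$ of $G$ by the set $S(v) = \{c((v,w)) : (v,w) \in A(G)\}$ of colours appearing on the out-arcs of $v$; this is a colouring of $V(G)$ by subsets of $\{1,\ldots,n\}$, hence using at most $2^n$ colours. To see that it is proper, take an arc $(u,v)$ of $G$. Its colour $c((u,v))$ lies in $S(u)$, since $(u,v)$ is an out-arc of $u$. On the other hand, every out-arc $(v,w)$ of $v$ is consecutive with $(u,v)$, hence adjacent to it in $\delta(G)$, so $c((v,w)) \neq c((u,v))$; thus $c((u,v)) \notin S(v)$. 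The colour $c((u,v))$ therefore separates the two sets, so $S(u) \neq S(v)$ and the colouring is proper. This direction is essentially bookkeeping once the right encoding is fixed.

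Part (ii) is where the genuine combinatorics enters, and where I expect the real obstacle to lie: the bound $\binom{n}{\lfloor n/2\rfloor}$ is precisely the Sperner number, the maximum size of an antichain in the subset lattice of $\{1, \ldots, n\}$. Given a proper colouring of $G$ with at most $\binom{n}{\lfloor n/2\rfloor}$ colours, I would invoke Sperner's theorem to relabel the colour classes by distinct, pairwise incomparable subsets of $\{1, \ldots, n\}$; write $f(v)$ for the subset assigned to $v$. Then for each arc $(u,v)$ we have $f(u) \neq f(v)$ and, by incomparability, $f(u) \setminus f(v) \neq \emptyset$, so I may choose a colour $c((u,v)) \in f(u) \setminus f(v)$.

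It remains to check that $c$ properly colours $\delta(G)$. For consecutive arcs $(u,v)$ and $(v,w)$, the construction gives $c((u,v)) \notin f(v)$ while $c((v,w)) \in f(v)$, so the two colours differ. The crux is recognising that \emph{incomparability} of the encoding subsets, not mere distinctness, is what forces the difference set to be nonempty; this is exactly why an antichain, and hence the Sperner bound, is the correct hypothesis rather than a naive $2^n$-type bound, and it accounts for the asymmetry between the two estimates.
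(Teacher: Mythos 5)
Your proof is correct, and both halves are sound. A point of comparison worth noting: the paper states this theorem as a quoted result of Entringer and Harner and gives no proof of it, so there is nothing internal to match line by line; however, the machinery of Section~2 proves a generalisation (Theorem~\ref{rtwz}), and your argument is essentially its $k=1$ case made concrete. In part~(i), colouring $v$ by the set $S(v)$ of colours on its out-arcs is the composite of the map $u \mapsto (u^-,u^+)$ into $\delta_R(\delta(G))$ with the induced homomorphism into $\delta_R(K_n)$ and the retraction of Corollary~\ref{drtond} onto $\nd(\mathcal{I}(\overline{K}_n))$, a digraph on $2^n$ vertices whose chromatic number is its order. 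In part~(ii), relabelling the colour classes by pairwise incomparable subsets is exactly the embedding of the complete graph on $\binomial{n}{\lfloor n/2 \rfloor}$ vertices into the symmetric part of $\nd(\mathcal{I}(\overline{K}_n))$ furnished by Sperner's theorem and Lemma~\ref{ndtokw}, after which Corollary~\ref{coltodr} returns an $n$-colouring of $\delta(G)$. Your direct construction avoids the adjoint formalism and is self-contained, which is a genuine advantage for this single-step statement; the paper's abstract route is what permits iteration to arbitrary $k$. You also correctly identify the one place where care is needed: in (ii) the choice of $c((u,v)) \in f(u)\setminus f(v)$ requires incomparability and not mere distinctness of the labels, and distinctness on every arc (including digons) is supplied by properness of the given vertex colouring. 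No gaps.
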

Inductively, the bound gives $\theta\!\left(\log^{(k)}(\chi(G))\!\right)$ behaviour
for $\chi(\delta^k(G))$ in terms of $\chi(G)$.

Undirected graphs can be viewed as symmetric digraphs
with each edge corresponding to an opposite pair of arcs.
In~\cite{poljakrodl}, Poljak and R\"{o}dl give 
a characterisation of the chromatic number of the arc graph of an
undirected graph.
\begin{theorem}[\cite{poljakrodl}] \label{pr}
If $G$ is an undirected graph, then
$$
\chi(\delta(G)) = 
\min \left \{ n \mid \chi(G) \leq \binomial{n}{\lfloor n/2 \rfloor} \right \}.
$$
\end{theorem}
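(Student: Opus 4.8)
The plan is to prove the two inequalities separately. The upper bound $\chi(\delta(G)) \le \min\{n \mid \chi(G) \le \binomial{n}{\lfloor n/2 \rfloor}\}$ is immediate from Theorem~\ref{ea}(ii): writing $m$ for the minimising value, the definition of $m$ gives $\chi(G) \le \binomial{m}{\lfloor m/2 \rfloor}$, whence $\chi(\delta(G)) \le m$. So all the real work lies in the matching lower bound, and for this I would prove the following sharpening of Theorem~\ref{ea}(i) valid in the symmetric case: if $\chi(\delta(G)) \le n$ then $\chi(G) \le \binomial{n}{\lfloor n/2 \rfloor}$. Granting this, setting $n = \chi(\delta(G))$ shows that $\chi(\delta(G))$ is one of the values $n$ with $\chi(G) \le \binomial{n}{\lfloor n/2 \rfloor}$, so by minimality $\chi(\delta(G)) \ge m$, closing the gap.

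To prove the lemma I would fix a proper colouring $c : A(G) \to \{1,\dots,n\}$ of $\delta(G)$ and, for each vertex $v$, record the colour-sets of the arcs leaving and entering $v$: $S^+(v) = \{c(v,w) \mid (v,w) \in A(G)\}$ and $S^-(v) = \{c(u,v) \mid (u,v) \in A(G)\}$. The defining adjacency of $\delta(G)$ says precisely that an in-arc $(u,v)$ and an out-arc $(v,w)$ are consecutive and hence receive distinct colours; applied to all such pairs this yields the key disjointness $S^+(v) \cap S^-(v) = \emptyset$ for every vertex $v$.

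Now the symmetry of $G$ enters. Given an edge $\{u,v\}$, both arcs $(u,v)$ and $(v,u)$ are present. The colour $c(u,v)$ lies in $S^+(u)$ and, being the colour of an in-arc at $v$, lies in $S^-(v)$; by disjointness it is therefore \emph{not} in $S^+(v)$, so $c(u,v) \in S^+(u) \setminus S^+(v)$. Symmetrically $c(v,u) \in S^+(v) \setminus S^+(u)$. Hence $S^+(u)$ and $S^+(v)$ are incomparable subsets of $\{1,\dots,n\}$ whenever $u$ and $v$ are adjacent. This is exactly the point where symmetry is essential: in a general digraph a single arc only forces $S^+(u) \ne S^+(v)$, which accounts for the weaker $2^n$ bound of Theorem~\ref{ea}(i), whereas here we upgrade distinctness to incomparability.

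The final step, which I expect to be the main obstacle, is to convert the implication ``adjacent $\Rightarrow$ incomparable'' into a colouring with only antichain-many colours. By Sperner's theorem the largest antichain of $2^{\{1,\dots,n\}}$ has size $\binomial{n}{\lfloor n/2 \rfloor}$, so by Dilworth's theorem (equivalently, by an explicit symmetric chain decomposition) the Boolean lattice partitions into $\binomial{n}{\lfloor n/2 \rfloor}$ chains. I would then colour each vertex $v$ by the index of the chain containing $S^+(v)$. Members of a single chain are pairwise comparable, while adjacent vertices have incomparable $S^+$-sets, so adjacent vertices always fall in distinct chains; this is a proper colouring of $G$, giving $\chi(G) \le \binomial{n}{\lfloor n/2 \rfloor}$. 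The subtlety is recognising that it is incomparability, not mere distinctness, that Sperner and Dilworth convert into the sharp antichain bound.
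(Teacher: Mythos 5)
Your proof is correct and complete: the upper bound is indeed just Theorem~\ref{ea}(ii); the disjointness $S^+(v)\cap S^-(v)=\emptyset$ does follow because every in-arc/out-arc pair at $v$ spans an arc of $\delta(G)$; symmetry correctly upgrades distinctness to incomparability of $S^+(u)$ and $S^+(v)$ for adjacent $u,v$; and the Sperner--Dilworth chain colouring closes the argument. It is, however, essentially the same proof as the paper's, unwound into elementary language for $k=1$ rather than phrased functorially. The paper never proves Theorem~\ref{pr} in isolation --- it obtains it as the case $k=1$ of Theorem~\ref{rtwz} --- and under that proof your construction appears piece by piece: the assignment $v\mapsto (S^-(v),S^+(v))$ is precisely the homomorphism $G\rightarrow\delta_R(K_n)$ produced by the adjunction lemma of~\cite{fonioktardif} (compose $u\mapsto(u^-,u^+)$ with $\delta_R$ applied to the colouring); the fact that adjacent vertices receive incomparable $S^+$-sets is the statement that an undirected edge must land on a symmetric arc of $\nd(\mathcal{I}(\overline{K}_n))$, i.e.\ on an incomparable pair in the Boolean lattice (Corollary~\ref{drtond} with $k=1$, restricted as in the symmetric-restriction step); and your chain-decomposition colouring is word for word the retraction of $[\nd(P)]$ onto a maximum antichain in Lemma~\ref{ndtokw}. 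What your version buys is brevity and self-containedness; what it gives up is iterability: your key step uses that both $(u,v)$ and $(v,u)$ are arcs, which already fails for $\delta(G)$ itself, so the argument cannot be repeated to reach $\delta^k(G)$ --- and that obstruction is exactly what the paper's adjoint-and-retract machinery is built to bypass.
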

In particular, if $G$ is undirected, then $\chi(\delta(G))$ depends on 
$\chi(G)$ alone and not on the structure of $G$. In contrast, directed
graphs with equal chromatic numbers can have arc graphs with distinct
chromatic numbers. For instance, let $C$ be the cyclic tournament on
three vertices, and $T$ the transitive tournament on three vertices.
then $\chi(C) = \chi(T) = 3$, while $\chi(\delta(C)) = 3$ and $\chi(\delta(T)) = 2$.

Now, what about iterated arc graphs of undirected graphs? 
If $G$ is undirected, then Theorem~\ref{pr} characterises
$\chi(\delta(G))$ in terms of $\chi(G)$, but since $\delta(G)$
is directed,  Theorem~\ref{pr} cannot be used to
characterise $\chi(\delta^2(G))$ in terms of $\chi(\delta(G))$.

In this paper, we show that $\chi(\delta^k(G))$ is indeed characterised
by $\chi(G)$. The characterisation uses numbers defined in terms
of specific posets. Let $\overline{K}_n$ be the antichain
of size $n$. For a poset $P$, let $\mathcal{I}(P)$ be the poset of ideals
(i.e., downsets) of $P$, ordered by inclusion. Then $\mathcal{I}(P)$
is itself a poset, and the construction can be iterated.
We let $b(n,k)$ be the maximum size of an antichain in $\mathcal{I}^k(\overline{K}_n)$
(i.e., the {\em width} of $\mathcal{I}^k(\overline{K}_n)$).
Our main result is the following.

\begin{theorem} \label{rtwz} For any undirected graph $G$ and any integer $k \geq 1$, 
$$
\chi(\delta^k(G)) = 
\min \left \{ n \mid \chi(G) \leq b(n,k) \right \}.
$$
\end{theorem}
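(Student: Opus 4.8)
The plan is to recast the inequality $\chi(\delta^k(G)) \le n$ as a graph homomorphism problem into a digraph built from a poset, and to show that applying $\delta$ on the graph side corresponds exactly to applying the ideal functor $\mathcal{I}$ on the poset side. For a poset $P$, let $\nd(P)$ be the digraph with vertex set $P$ and an arc from $x$ to $y$ precisely when $y \not\le_P x$. The point of this definition is the identity $\nd(\overline{K}_n) = K_n^{\mathrm{sym}}$, the loopless complete symmetric digraph on $n$ vertices: in an antichain $y \le x$ holds only when $y = x$, so the arcs of $\nd(\overline{K}_n)$ are exactly the ordered pairs of distinct elements. Since $\chi(D) \le n$ is equivalent to the existence of a homomorphism $D \to K_n^{\mathrm{sym}}$, we obtain the starting point $\chi(\delta^k(G)) \le n \iff \delta^k(G) \to \nd(\overline{K}_n)$.

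The heart of the argument, and the step I expect to be the main obstacle, is the following adjunction-like lemma, valid for every digraph $H$ and every poset $P$:
$$ \delta(H) \to \nd(P) \quad\Longleftrightarrow\quad H \to \nd(\mathcal{I}(P)). $$
For the forward direction I would start from a homomorphism $f$ that assigns to each arc $a$ of $H$ an element $f(a) \in P$ with $f(b) \not\le f(a)$ whenever $a,b$ are consecutive arcs, and define $g(v) \in \mathcal{I}(P)$ to be the downset generated by the $P$-values of the arcs entering $v$. If $(u,v)$ is an arc of $H$ then $f(u,v) \in g(v)$, while $f(u,v) \notin g(u)$: otherwise $f(u,v) \le f(t,u)$ for some arc $(t,u)$ entering $u$, contradicting the consecutiveness condition applied to $(t,u),(u,v)$. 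Hence $g(v) \not\subseteq g(u)$, so $g$ is a homomorphism to $\nd(\mathcal{I}(P))$. For the converse, given such a $g$ I would colour each arc $(u,v)$ of $H$ by any element of $g(v) \setminus g(u)$; the downset property of the ideals $g(v)$ forces $f(v,w) \not\le f(u,v)$ on consecutive arcs, exactly because $f(u,v) \in g(v)$ but $f(v,w) \notin g(v)$. The delicate point is getting the two auxiliary constructions to be genuine ideals and to match the strict non-comparability encoded by $\nd$; the asymmetry between ``$\in g(v)$'' and ``$\notin g(u)$'' is what makes the downset functor $\mathcal{I}$ the right one.

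Granting the lemma, I would iterate it $k$ times along $H = \delta^{k-1}(G), \delta^{k-2}(G), \dots, G$ to collapse the whole tower into $\chi(\delta^k(G)) \le n \iff G \to \nd(\mathcal{I}^k(\overline{K}_n))$. It remains to exploit that $G$ is undirected. A homomorphism from a symmetric loopless graph $G$ into the digraph $\nd(P)$ must send each edge $\{u,v\}$ to a pair with $g(v) \not\le g(u)$ and $g(u) \not\le g(v)$, that is, to a pair of incomparable elements; thus $G \to \nd(P)$ is equivalent to $G \to \mathcal{A}(P)$, where $\mathcal{A}(P)$ is the incomparability graph of $P$. Finally, independent sets of $\mathcal{A}(P)$ are chains and cliques are antichains, so by Dilworth's theorem $\chi(\mathcal{A}(P)) = \mathrm{width}(P)$, whence $G \to \mathcal{A}(P) \iff \chi(G) \le \mathrm{width}(P)$. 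Applying this with $P = \mathcal{I}^k(\overline{K}_n)$ gives $\chi(\delta^k(G)) \le n \iff \chi(G) \le b(n,k)$, which is the claimed formula. As a sanity check, the case $k=1$ involves $\nd(\mathcal{I}(\overline{K}_n)) = \nd(2^{[n]})$, whose associated incomparability graph has width $\binom{n}{\lfloor n/2\rfloor}$ by Sperner's theorem, recovering Theorem~\ref{pr}.
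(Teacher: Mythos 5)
Your proof is correct, and its overall strategy coincides with the paper's: reduce $\chi(\delta^k(G))\le n$ to the existence of a homomorphism $G\to\nd(\mathcal{I}^k(\overline{K}_n))$, note that an undirected $G$ must be mapped onto incomparable pairs, and finish with Dilworth's theorem identifying the chromatic number of the incomparability graph of $\mathcal{I}^k(\overline{K}_n)$ with its width $b(n,k)$. Where you genuinely differ is in how the transfer step is packaged. The paper invokes a right adjoint $\delta_R$ of the arc-graph functor defined for an \emph{arbitrary} target digraph $K$ (vertices are pairs $(X,Y)$ of vertex sets with all arcs from $X$ to $Y$), and then needs two retraction lemmas to pass from $\delta_R(\nd(P))$ to $\nd(\mathcal{I}(P))$. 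You instead prove the specialised equivalence $\delta(H)\to\nd(P)\iff H\to\nd(\mathcal{I}(P))$ directly: your assignment $v\mapsto{\downarrow}\{f(a): a \mbox{ enters } v\}$ plays the role of the paper's canonical map $u\mapsto(u^-,u^+)$ followed by its retraction onto ideals, and your choice of an element of $g(v)\setminus g(u)$ is their ``pick a point of $Y\cap Z$'' counit. Both of your verifications are sound, including the two delicate points: $f(u,v)\notin g(u)$ because consecutiveness forbids $f(u,v)\le f(t,u)$, and $f(v,w)\not\le f(u,v)$ on consecutive arcs because $g(v)$ is a downset containing $f(u,v)$ but not $f(v,w)$. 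Your route is more elementary and self-contained (no appeal to the general adjoint of Foniok--Tardif and no retraction machinery), at the cost of not exhibiting the general construction $\delta_R$, which is of independent interest and applies to targets that are not of the form $\nd(P)$; for the theorem itself, and even for the remarks in the paper's final section, your specialised lemma suffices since $K_n=\nd(\overline{K}_n)$.
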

In particular, $\mathcal{I}(\overline{K}_n)$ is the boolean lattice with
$n$ generators, and $b(n,1) = \binomial{n}{\lfloor n/2 \rfloor}$
by Sperner's theorem.
Thus the case $k=1$ of Theorem~\ref{rtwz} is Theorem~\ref{pr}.
$\mathcal{I}^2(\overline{K}_n)$ is the free distributive lattice
with $n$ generators. The largest known antichain in
$\mathcal{I}^2(\overline{K}_n)$ consists of the ideals
with exactly $2^{n-1}$ elements, that is, half
the elements of $\mathcal{I}(\overline{K}_n)$.
In~\cite{mchard}, it has been verified that for $n$ up to $6$,
this is indeed the unique maximum antichain in $\mathcal{I}^2(\overline{K}_n)$.
Thus the known values of $b(n,2)$ are as follows.
$$
b(3,2) = 4, b(4,2) = 24, b(5,2) = 621, b(6,2) = 492288.
$$
In~\cite{mchard}, McHard conjectures that the ideals
with exactly half the elements of $\mathcal{I}(\overline{K}_n)$
always constitute the unique maximum antichain of $\mathcal{I}^2(\overline{K}_n)$.
Note that the ideals of a given size in $\mathcal{I}^{k-1}(\overline{K}_n)$
constitute a level in $\mathcal{I}^k(\overline{K}_n)$.
The posets in which some level constitutes a maximum antichain
are called {\em Sperner posets}. Thus McHard's conjecture implies that
the free distributive lattices are Sperner posets. The conjecture
seems plausible, though highly difficult. Even the cardinalities
of free distributive lattices, known as the Dedekind numbers, are not known 
in closed form. In \cite{markowsky}, it is shown that for any fixed $\ell$
there exists a polynomial $p_{\ell}$ such that $p_{\ell}(n)$ is the number
of elements at the $\ell$-th level of $\mathcal{I}^2(\overline{K}_n)$.

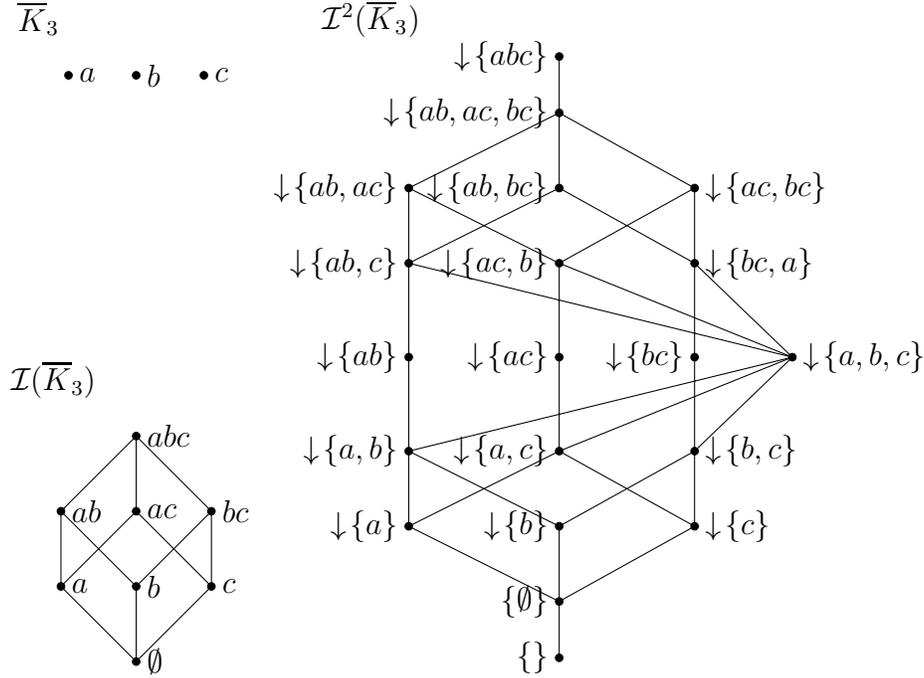
\begin{figure}
\begin{tikzpicture}
	\begin{scope}
		\filldraw (-.9,0) node[right]{$a$} circle(.05);
		\filldraw (0,0) node[right]{$b$} circle(.05);
		\filldraw (.9,0) node[right]{$c$} circle(.05);
		\draw (-1.3,.75) node{$\overline{K}_3$};
	\end{scope}
	
	\begin{scope}[yshift=-7.8cm]
		\filldraw (0,0) node[right]{$\emptyset$} circle(.05);
		\filldraw (-1,1) node[right]{$a$} circle(.05);
		\filldraw (0,1) node[right]{$b$} circle(.05);
		\filldraw (1,1) node[right]{$c$} circle(.05);
		\filldraw (-1,2) node[right]{$ab$} circle(.05);
		\filldraw (0,2) node[right]{$ac$} circle(.05);
		\filldraw (1,2) node[right]{$bc$} circle(.05);
		\filldraw (0,3) node[right]{$abc$} circle(.05);
		\draw (-1.1,3.75) node{$\mathcal{I}(\overline{K}_3)$};
		\draw (0,0)--(-1,1)--(-1,2)--(0,3)--(0,2)--(-1,1);
		\draw (-1,2)--(0,1)--(0,0)--(1,1)--(0,2);
		\draw (0,1)--(1,2)--(0,3);
		\draw (1,1)--(1,2);
	\end{scope}
	
	\begin{scope}[xshift=160, yshift=-7cm]
		\filldraw (0,-.75) node[left]{$\{\}$} circle(.05);
		\filldraw (0,0) node[left]{$\{\emptyset\}$} circle(.05);
		\filldraw (-2,1) node[left]{$\downarrow\!\{a\}$} circle(.05);
		\filldraw (0,1) node[left]{$\downarrow\!\{b\}$} circle(.05);
		\filldraw (1.8,1) node[right]{$\downarrow\!\{c\}$} circle(.05);
		\filldraw (-2,2) node[left]{$\downarrow\!\{a,b\}$} circle(.05);
		\filldraw (0,2) node[left]{$\downarrow\!\{a,c\}$} circle(.05);
		\filldraw (1.8,2) node[right]{$\downarrow\!\{b,c\}$} circle(.05);
		\filldraw (-2,3.25) node[left]{$\downarrow\!\{ab\}$} circle(.05);
		\filldraw (0,3.25) node[left]{$\downarrow\!\{ac\}$} circle(.05);
		\filldraw (1.8,3.25) node[left]{$\downarrow\!\{bc\}$} circle(.05);
		\filldraw (3.1,3.25) node[right]{$\downarrow\!\{a,b,c\}$} circle(.05);
		\filldraw (-2,4.5) node[left]{$\downarrow\!\{ab,c\}$} circle(.05);
		\filldraw (0,4.5) node[left]{$\downarrow\!\{ac,b\}$} circle(.05);
		\filldraw (1.8,4.5) node[right]{$\downarrow\!\{bc,a\}$} circle(.05);
		\filldraw (-2,5.5) node[left]{$\downarrow\!\{ab,ac\}$} circle(.05);
		\filldraw (0,5.5) node[left]{$\downarrow\!\{ab,bc\}$} circle(.05);
		\filldraw (1.8,5.5) node[right]{$\downarrow\!\{ac,bc\}$} circle(.05);
		\filldraw (0,6.5) node[left]{$\downarrow\!\{ab,ac,bc\}$} circle(.05);
		\filldraw (0,7.25) node[left]{$\downarrow\!\{abc\}$} circle(.05);
		\draw (-2.5,7.75) node{$\mathcal{I}^2(\overline{K}_3)$};
		\draw (0,-.75)--(0,0)--(-2,1)--(-2,2)--(-2,3.25)--(-2,4.5)--(-2,5.5)--(0,6.5)--(0,7.25);
		\draw (0,0)--(0,1)--(-2,2)--(3.1,3.25)--(-2,4.5)--(0,5.5)--(0,6.5)--(1.8,5.5)--(0,4.5)--(0,3.25)--(0,2)--(1.8,1)--(1.8,2)--(1.8,3.25)--(1.8,4.5)--(0,5.5);
		\draw (0,0)--(1.8,1);
		\draw (-2,1)--(0,2)--(3.1,3.25)--(0,4.5)--(-2,5.5);
		\draw (0,1)--(1.8,2)--(3.1,3.25)--(1.8,4.5)--(1.8,5.5);
	\end{scope}

\end{tikzpicture}

\caption{$\mathcal{I}^k(\overline{K}_3)$, $k \in \{0,1,2\}.$} \label{fig:K3}
\end{figure}

By inspecting the 20-element $\mathcal{I}^2(\overline{K}_3)$--see Figure~\ref{fig:K3}--it 
is easy to check that $b(3,3)=7$. The levels $8$, $9$, $11$, $12$ 
 of $\mathcal{I}^3(\overline{K}_3)$ each
constitute antichains of size $7$, but level $10$ is an antichain
of size $6$. Thus the level of maximum size is not necessarily the
middle level. Apart from the trivial cases $b(1,k) = 1$ and $b(2,k) = 2$ 
for all $k$, no further values $b(n,k)$ are known.

\section{Proof of Theorem \ref{rtwz}}
We will show that an $n$-colouring of $\delta^k(G)$ corresponds 
to a homomorphism from $G$ to a suitably defined digraph
$\nd(\mathcal{I}^k(\overline{K}_n))$. When $G$ is undirected, 
its edges must be
mapped to the symmetric arcs of $\nd(\mathcal{I}^k(\overline{K}_n))$. 
These symmetric arcs span a graph which retracts to $K_{b(n,k)}$.
The details of this argument are provided below.

\subsection{The right adjoint of the arc graph construction}

Viewed as a digraph functor, $\delta$ admits a kind of ``right adjoint''.
More precisely, there is a construction $\delta_R$ such that 
there exists a homomorphism from $\delta(G)$ to $K$ if and only if 
there exists a homomorphism from $G$ to $\delta_R(K)$. (Note that our 
definition of a right adjoint is less restrictive than the standard categorial 
definition, in which a correspondence between morphisms is required.)
Here, a {\em homomorphism} 
$\phi: G \rightarrow H$ is a map $\phi$ from the vertex set of $G$ to that of $H$ 
such that if $(u,v)$ is an arc of $G$, then $(\phi(u),\phi(v))$ is an arc of $H$.

For a digraph $K$, $\delta_R(K)$ is the digraph defined as follows.
\begin{itemize}
\item The vertices of $\delta_R(K)$ are the ordered pairs $(X,Y)$ such that 
$X$ and $Y$ are sets of vertices of $K$ with an arc $(x,y)$ between
all vertices $x$ of $X$ and all vertices $y$ of $Y$.
\item The arcs of $\delta_R(K)$ are ordered pairs $((X,Y),(Z,W))$ such that
$Y \cap Z \neq \emptyset$.
\end{itemize}
The sets $X, Y$ of vertices of $K$ used in the definition of $\delta_R(K)$
are allowed to be empty. In particular, if $K_0$ is the graph with no vertex and
no edge, then $\delta_R(K_0)$ is a single vertex, and $\delta_R^2(K_0)$ has three
vertices and one arc.

We use the following result.
\begin{lemma}[\cite{fonioktardif}]
Given two digraphs $G$ and $K$, there exists a homomorphism
of $\delta(G)$ to $K$ if and only if there exists a homomorphism
of $G$ to $\delta_R(K)$.
\end{lemma}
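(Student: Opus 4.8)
The plan is to establish the two implications directly by exhibiting mutually reverse constructions; the statement is an adjunction, so once the incidences are unwound the verifications are purely formal. Throughout I use that a vertex of $\delta(G)$ is an arc $(u,v)$ of $G$, and that $((u,v),(v,w))$ is an arc of $\delta(G)$ precisely when $(u,v)$ and $(v,w)$ are consecutive, i.e.\ the head of the first equals the tail of the second. Recall also that a vertex $(X,Y)$ of $\delta_R(K)$ carries an arc of $K$ from every $x \in X$ to every $y \in Y$, and that $((X,Y),(Z,W))$ is an arc of $\delta_R(K)$ exactly when $Y \cap Z \neq \emptyset$.

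First I would take a homomorphism $g \colon G \to \delta_R(K)$ and produce $f \colon \delta(G) \to K$. Write $g(v) = (X_v, Y_v)$. For each arc $(u,v)$ of $G$, the fact that $g$ preserves $(u,v)$ gives $Y_u \cap X_v \neq \emptyset$, so I choose any representative $f((u,v)) \in Y_u \cap X_v$. To verify $f$ is a homomorphism, let $((u,v),(v,w))$ be an arc of $\delta(G)$. Then $f((u,v)) \in X_v$ (it lies in $Y_u \cap X_v$) and $f((v,w)) \in Y_v$ (it lies in $Y_v \cap X_w$); since $(X_v, Y_v)$ is a vertex of $\delta_R(K)$, every pair in $X_v \times Y_v$ is an arc of $K$, whence $(f((u,v)), f((v,w)))$ is an arc of $K$.

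Conversely, given $f \colon \delta(G) \to K$, I would define $g \colon G \to \delta_R(K)$ by setting, for each vertex $v$ of $G$,
$$X_v = \{\, f((u,v)) \mid (u,v) \in A(G) \,\}, \qquad Y_v = \{\, f((v,w)) \mid (v,w) \in A(G) \,\},$$
the $f$-images of the in-arcs and the out-arcs at $v$ respectively (possibly empty). That $(X_v, Y_v)$ is a legitimate vertex follows because any $x = f((u,v)) \in X_v$ and $y = f((v,w)) \in Y_v$ arise from consecutive arcs, so $((u,v),(v,w)) \in A(\delta(G))$ and hence $(x,y) \in A(K)$. That $g$ preserves arcs follows because, for $(u,v) \in A(G)$, the vertex $f((u,v))$ lies in $Y_u$ (being an out-arc at $u$) and in $X_v$ (being an in-arc at $v$), so $Y_u \cap X_v \neq \emptyset$.

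Since each construction uses only the defining incidences, the two directions together yield the equivalence. I expect no serious obstacle here: the only points requiring care are the arbitrary choice of representative $f((u,v)) \in Y_u \cap X_v$ when passing from $g$ to $f$, and the treatment of sources and sinks of $G$ when passing from $f$ to $g$, where $X_v$ or $Y_v$ is empty. The latter is precisely why the definition of $\delta_R(K)$ must permit empty coordinate sets, and it is also the conceptual reason that $\delta_R$ behaves as a right adjoint rather than a strict one.
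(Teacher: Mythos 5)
Your proof is correct and is essentially the paper's argument with the compositions unwound: your map $f$ is exactly the paper's composite of $\delta(g)$ with the counit $\delta(\delta_R(K))\to K$ (pick an element of $Y\cap Z$), and your $g$ is the composite of the unit $u\mapsto(u^-,u^+)$ with $\delta_R(f)$. The paper packages this as functoriality plus unit/counit homomorphisms, but the underlying verifications are the same ones you carry out directly.
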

\begin{proof}
We include the sketch of an elementary proof to make the paper self contained.
First note that a homomorphism $\phi: G \rightarrow H$ induces homomorphisms
$\delta(\phi): \delta(G) \rightarrow \delta(H)$ defined by 
$\delta(\phi)(u,v) = (\phi(u), \phi(v))$, and
$\delta_R(\phi): \delta_R(G) \rightarrow \delta_R(H)$ defined by 
$\delta_R(\phi)(X,Y) = (\phi(X),\phi(Y))$. Second, note that
there are homomorphisms from $\delta(\delta_R(G))$ to $G$ defined
by mapping $((X,Y),(Z,W))$ to any element of $Y \cap Z$,
and from $G$ to $\delta_R(\delta(G))$ defined by
mapping $u$ to $(u^-,u^+)$, where $u^-$ and $u^+$ are respectively
the sets of arcs entering and leaving $u$.

Therefore, if  there exists a homomorphism from $\delta(G)$ to $K$
then there exists a homomorphism from $\delta_R(\delta(G))$ to $\delta_R(K)$,
which composed with a homomorphism from $G$ to $\delta_R(\delta(G))$
yields a homomorphism from $G$ to $\delta_R(K)$. Similarly,
if  there exists a homomorphism from $G$ to $\delta_R(K)$,
then there exists a homomorphism from $\delta(G)$ to $\delta(\delta_R(K))$,
and the latter admits a homomorphism to $K$.
\end{proof}
 
\begin{corollary} \label{coltodr}
Given two digraphs $G$ and $K$ and any $k \geq 1$, there exists a homomorphism
of $\delta^k(G)$ to $K$ if and only if there exists a homomorphism
of $G$ to $\delta_R^k(K)$.
\end{corollary}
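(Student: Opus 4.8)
The plan is to argue by induction on $k$, using the preceding Lemma both as the base case and as the engine of the inductive step. The base case $k = 1$ is precisely the statement of the Lemma. For the inductive step, I would assume that for some fixed $k \geq 1$ the equivalence holds for \emph{every} pair of digraphs; that is, that a homomorphism $\delta^k(G') \to K'$ exists if and only if a homomorphism $G' \to \delta_R^k(K')$ exists, for all digraphs $G'$ and $K'$.

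To pass from $k$ to $k+1$, I would peel off a single arc-graph operation. Writing $\delta^{k+1}(G) = \delta(\delta^k(G))$, the Lemma applied with $\delta^k(G)$ in the role of the source digraph gives that a homomorphism $\delta^{k+1}(G) \to K$ exists if and only if a homomorphism $\delta^k(G) \to \delta_R(K)$ exists. I would then invoke the induction hypothesis with $\delta_R(K)$ in the role of the target digraph: a homomorphism $\delta^k(G) \to \delta_R(K)$ exists if and only if a homomorphism $G \to \delta_R^k(\delta_R(K))$ exists. Since $\delta_R^k(\delta_R(K)) = \delta_R^{k+1}(K)$, chaining these two equivalences yields the desired statement for $k+1$, completing the induction.

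The only point requiring care is that the Lemma and the induction hypothesis must each be applied in full generality: the Lemma to a new source digraph, namely $\delta^k(G)$, and the induction hypothesis to a new target digraph, namely $\delta_R(K)$. It is therefore essential that both are quantified over all digraphs rather than over a fixed pair, which is why I phrase the inductive hypothesis with the universal quantifier above. There is no genuine obstacle here: the entire content is already contained in the $k=1$ Lemma, and the corollary is a formal consequence of its bi-directional form together with the recursions $\delta^{k+1} = \delta \circ \delta^k$ and $\delta_R^{k+1} = \delta_R \circ \delta_R^k$. One could equally well peel from the other end, using $\delta^{k+1}(G) = \delta^k(\delta(G))$ and applying the induction hypothesis first and then the Lemma; the two routes are symmetric.
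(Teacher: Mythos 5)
Your induction is correct and is exactly the argument the paper intends: the corollary is stated without proof as an immediate consequence of the preceding Lemma, and the standard way to make that precise is the induction on $k$ you give, applying the Lemma to the source $\delta^k(G)$ and the inductive hypothesis to the target $\delta_R(K)$. Your remark that both statements must be quantified over all digraphs is the right point of care, and nothing is missing.
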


A subdigraph $H$ of a digraph $G$ is called a {\em retract} of $G$
if there exists a homomorphism $\rho: G \rightarrow H$ such that the 
restriction of $\rho$ to $H$ is the identity.
\begin{lemma} \label{retract}
The digraph $\delta_R(K)$ retracts to its subdigraph induced by
the vertices $(X,Y)$ such that 
$$
Y = \{ y \in V(K) | (x,y) \in A(K) \mbox{ for all } x \in X \}
$$
and 
$$
X = \{ x \in V(K) | (x,y) \in A(K) \mbox{ for all } y \in Y \}.
$$
\end{lemma}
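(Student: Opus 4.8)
The plan is to recognise the pairs $(X,Y)$ in the statement as the closed concepts of the polarity (Galois connection) induced by the arc relation of $K$, and to retract $\delta_R(K)$ onto them by a closure operator. For a set $S \subseteq V(K)$ I would write $S^{+} = \{ y \in V(K) : (x,y) \in A(K) \text{ for all } x \in S \}$ for the common successors and $S^{-} = \{ x \in V(K) : (x,y) \in A(K) \text{ for all } y \in S \}$ for the common predecessors (with $\emptyset^{+} = \emptyset^{-} = V(K)$, so that the empty sets allowed in the definition of $\delta_R(K)$ cause no trouble). In this language a pair $(X,Y)$ is a vertex of $\delta_R(K)$ exactly when $X \times Y \subseteq A(K)$, equivalently when $Y \subseteq X^{+}$, equivalently when $X \subseteq Y^{-}$; and the subdigraph $H$ of the statement is precisely the set of pairs with $Y = X^{+}$ and $X = Y^{-}$. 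The first step is to record the routine polarity facts: both operations reverse inclusion, $S \subseteq S^{+-}$ and $S \subseteq S^{-+}$ always hold, and the triple identities $S^{+-+} = S^{+}$ and $S^{-+-} = S^{-}$ follow.

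Next I would define $\rho : \delta_R(K) \to H$ by $\rho(X,Y) = (X^{+-}, X^{+})$ and check that the image lands in $H$. It is a genuine vertex because every $x \in X^{+-}$ is, by definition, joined to every $y \in X^{+}$, so $X^{+-} \times X^{+} \subseteq A(K)$; and it is closed because $(X^{+-})^{+} = X^{+-+} = X^{+}$ and $(X^{+})^{-} = X^{+-}$ verify the two defining equations of $H$. That $\rho$ fixes $H$ pointwise is immediate: if $(X,Y) \in H$ then $Y = X^{+}$ and $X = Y^{-} = (X^{+})^{-} = X^{+-}$, so $\rho(X,Y) = (X,Y)$.

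It remains to check that $\rho$ preserves arcs, and the crucial observation is that an arc of $\delta_R(K)$ depends only on the second coordinate of its source and the first coordinate of its target, through nonemptiness of their intersection. If $((X,Y),(Z,W))$ is an arc, so that $Y \cap Z \neq \emptyset$, then from $Y \subseteq X^{+}$ and $Z \subseteq Z^{+-}$ we obtain $\emptyset \neq Y \cap Z \subseteq X^{+} \cap Z^{+-}$, which is exactly the condition that $(\rho(X,Y), \rho(Z,W))$ be an arc. In other words, passing to the closure can only enlarge the two coordinates that the arc relation inspects, so arcs are created, never destroyed; this completes the verification that $\rho$ is a retraction onto $H$.

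I expect the only real obstacle to be bookkeeping: choosing the closure so that it is applied to the coordinates the arc relation actually reads, and confirming that $(X^{+-},X^{+})$---rather than, say, $(Y^{-},X^{+})$, which need not even be a vertex---is the pair that is simultaneously closed, arc-monotone, and fixed on $H$. Once the construction is phrased through the polarity $S \mapsto S^{+}$, $S \mapsto S^{-}$, the verification reduces entirely to the standard triple identities of a Galois connection, and no further combinatorial input is needed.
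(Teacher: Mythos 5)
Your proof is correct and follows essentially the same approach as the paper's: both define a retraction by applying the Galois-connection closure induced by the arc relation, the paper closing on the second coordinate via $\rho(X,Y) = (Y^-, Y^{-+})$ and you on the first via $\rho(X,Y) = (X^{+-}, X^+)$, which is a mirror-image of the same idea. The paper states only that $\rho$ ``is easily seen to be a retraction''; your write-up supplies exactly the verifications (image lands in the closed pairs, fixes them, and preserves arcs because closure only enlarges the coordinates the arc relation inspects) that this claim elides.
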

\begin{proof}
Let $\rho: \delta_R(K) \rightarrow \delta_R(K)$ be the map
defined by $\rho(X,Y) = (X',Y')$, where $X'$ is the set of common inneighbours
of $Y$, and $X'$ is the set of common outneighbours of $Y'$.
Then $\rho$ is easily seen to be a retraction on the prescribed
subdigraph.
\end{proof}

\subsection{Nondomination digraphs of posets}
The {\em nondomination digraph} $\nd(P)$ of a poset $P$ is the 
digraph which has the elements of $P$ for vertices, and for arcs
the ordered pairs $(u,v)$ such that $u$ is strictly less than
$v$ or $u$ and $v$ are incomparable. In other words,
if $G = \nd(P)$, then $A(G)$ is the complement in $V(G)^2$ 
of the relation $\geq$.
Note that $K_n = \nd(\overline{K}_n)$, where  $\overline{K}_n$ is 
the antichain of size $n$. The constructions,
$\delta_R$, and $\nd$ connect to the construction $\mathcal{I}$
of the Introduction as follows.
\begin{lemma}
For any poset $P$, $\delta_R(\nd(P))$ retracts to 
$\nd(\mathcal{I}(P))$.
\end{lemma}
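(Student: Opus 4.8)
The plan is to invoke the preceding lemma (Lemma~\ref{retract}) with $K = \nd(P)$ and then identify the resulting retract explicitly with $\nd(\mathcal{I}(P))$. By Lemma~\ref{retract}, $\delta_R(\nd(P))$ retracts to the subdigraph induced by the ``closed'' pairs $(X,Y)$, namely those for which $Y$ is the set of common outneighbours of $X$ and $X$ is the set of common inneighbours of $Y$. So it suffices to exhibit an isomorphism between this induced subdigraph and $\nd(\mathcal{I}(P))$; the candidate map is $I \mapsto (I, V(P)\setminus I)$ sending an ideal to the pair consisting of the ideal and its complementary upset.

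First I would unwind the two closure conditions for the specific digraph $\nd(P)$. Recall that $(u,v)$ is an arc of $\nd(P)$ precisely when $u \not\geq v$. Hence the common outneighbours of a set $X$ are exactly $\{\, v : u \not\geq v \text{ for all } u \in X \,\} = V(P) \setminus \downarrow X$, and the common inneighbours of a set $Y$ are $\{\, u : u \not\geq v \text{ for all } v \in Y \,\} = V(P) \setminus \uparrow Y$. Thus the two closure conditions become $Y = V(P) \setminus \downarrow X$ and $X = V(P) \setminus \uparrow Y$. From these I would show the closed pairs are exactly the pairs $(I, V(P)\setminus I)$ with $I \in \mathcal{I}(P)$: if $(X,Y)$ is closed then $X = V(P)\setminus \uparrow\!\left(V(P)\setminus \downarrow X\right)$, and since $\downarrow X$ is a downset its complement is an upset equal to its own up-closure, forcing $X = \downarrow X$ to be a downset, with $Y = V(P)\setminus X$; conversely each ideal $I$ plainly yields a closed pair. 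This gives a bijection between closed vertices and the vertices $\mathcal{I}(P)$ of $\nd(\mathcal{I}(P))$.

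It then remains to match the arcs. An arc of $\delta_R(\nd(P))$ joins $(X,Y)$ to $(Z,W)$ when $Y \cap Z \neq \emptyset$. Under the identification $X = I$, $Y = V(P)\setminus I$, $Z = J$, this reads $(V(P)\setminus I)\cap J \neq \emptyset$, that is, $J \not\subseteq I$. On the other hand, $(I,J)$ is an arc of $\nd(\mathcal{I}(P))$ exactly when $I \not\geq J$ in the inclusion order, i.e.\ $J \not\subseteq I$. Since these conditions coincide, the bijection is a digraph isomorphism, and composing the retraction of Lemma~\ref{retract} with this isomorphism proves that $\delta_R(\nd(P))$ retracts to $\nd(\mathcal{I}(P))$.

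The main hazard here is bookkeeping rather than a deep obstacle: because the arcs of $\nd(P)$ are the complement of $\geq$, taking common outneighbours produces the complement of a \emph{down}set while taking common inneighbours produces the complement of an \emph{up}set, and it is easy to interchange $\downarrow$ and $\uparrow$. Getting the directions right is precisely what forces the closed $X$ to be a downset, hence an ideal, which is the crux of the identification; once that is pinned down, the arc-matching step is immediate.
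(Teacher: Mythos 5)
Your proof is correct and follows essentially the same route as the paper: both invoke Lemma~\ref{retract} with $K = \nd(P)$, identify the closed pairs with ideals $I$ paired with their complementary upsets, and match the arc condition $Y \cap Z \neq \emptyset$ with $J \not\subseteq I$. The only cosmetic difference is that you compute the two closure operators explicitly as $V(P)\setminus\downarrow X$ and $V(P)\setminus\uparrow Y$ and deduce $X = \downarrow X$, whereas the paper reaches the same conclusion by a short contradiction argument showing $X \cup Y = V(P)$ and then that $X$ is a downset.
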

\begin{proof} By Lemma~\ref{retract}, $\delta_R(\nd(P)))$ retracts
to its subdigraph $G$ induced by the vertices $(X,Y)$ such that $Y$ is the common
outneighbourhood of all vertices in $X$ and $X$ is the common
inneighbourhood of all vertices in $Y$. We will show that $G = \nd(\mathcal{I}(P))$.
 
For $(X,Y) \in V(G)$, suppose that there exists a vertex $u$ of $\nd(P)$ not contained 
in $X$ or $Y$. Since $u \not\in X$, $u$ is not a common inneighbor of the vertices in $Y$.
Thus, there exists a vertex $y \in Y$ such that $u \geq y$ in $P$.
Likewise, $u \not\in Y$ so there exists a vertex $x \in X$ such that $x \geq u$.
By transitivity, $x \geq y$, contradicting the fact that there is an arc from $x$ to $y$
in $K$. Therefore, $Y = \overline{X}$. 

Hence the elements of $G$ are determined by their first coordinates.
It is easy to see that these first coordinates are ideals of $P$,
that is, elements of $\mathcal{I}(P)$. Indeed, if $(X,\overline{X}) \in V(G)$
and $x < y \in X$, then $(y,x)$ is not an arc of $G$, hence $x \not \in \overline{X}$.
So it only remains to show that adjacency in $G$ corresponds to adjacency in
$\nd(\mathcal{I}(P))$. By definition $((X,\overline{X}),(Y, \overline{Y}))$ is an
arc of $G$ if and only if $\overline{X}$ intersects $Y$, that is, $X \not \supseteq Y$,
which is equivalent to $(X,Y) \in A(\nd(\mathcal{I}(P)))$.
\end{proof}
\begin{corollary} \label{drtond}
For any poset $P$ and any integer $k$, $\delta_R^k(\nd(P))$ retracts to 
$\nd(\mathcal{I}^k(P))$.
\end{corollary}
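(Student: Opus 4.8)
The plan is to argue by induction on $k$, with the base case $k=1$ being exactly the lemma immediately preceding this corollary. The engine of the induction is a single functorial fact that I would establish first: the construction $\delta_R$ preserves retracts, in the sense that if $H$ is a retract of a digraph $G$, then $\delta_R(H)$ is a retract of $\delta_R(G)$.

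To prove this fact, I would begin by checking that $\delta_R(H)$ is genuinely a subdigraph of $\delta_R(G)$ whenever $H$ is a subdigraph of $G$. A vertex $(X,Y)$ of $\delta_R(H)$ is a pair of sets $X, Y \subseteq V(H) \subseteq V(G)$ with $(x,y) \in A(H) \subseteq A(G)$ for all $x \in X$ and $y \in Y$; hence $(X,Y)$ is also a vertex of $\delta_R(G)$, and the arc condition $Y \cap Z \neq \emptyset$ is the same in both digraphs. Now let $\rho \colon G \to H$ be a retraction. The proof of the cited lemma of Foniok and Tardif already records that a homomorphism induces a homomorphism $\delta_R(\rho) \colon \delta_R(G) \to \delta_R(H)$ via $(X,Y) \mapsto (\rho(X), \rho(Y))$. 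For any vertex $(X,Y)$ of $\delta_R(H)$ we have $X, Y \subseteq V(H)$, so $\rho(X) = X$ and $\rho(Y) = Y$ because $\rho$ fixes $V(H)$ pointwise; thus $\delta_R(\rho)$ restricts to the identity on $\delta_R(H)$ and is therefore a retraction of $\delta_R(G)$ onto $\delta_R(H)$.

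With this in hand the inductive step is purely formal. Assuming $\delta_R^{k-1}(\nd(P))$ retracts to $\nd(\mathcal{I}^{k-1}(P))$, applying the preservation of retracts gives that $\delta_R^{k}(\nd(P)) = \delta_R(\delta_R^{k-1}(\nd(P)))$ retracts to $\delta_R(\nd(\mathcal{I}^{k-1}(P)))$. The preceding lemma, applied to the poset $\mathcal{I}^{k-1}(P)$, shows that this last digraph retracts to $\nd(\mathcal{I}(\mathcal{I}^{k-1}(P))) = \nd(\mathcal{I}^{k}(P))$. Finally I would invoke the transitivity of the retract relation: if $C \subseteq B \subseteq A$ with retractions $\rho_1 \colon A \to B$ and $\rho_2 \colon B \to C$, then $\rho_2 \circ \rho_1 \colon A \to C$ is a homomorphism that fixes $C$ pointwise, hence a retraction. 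Composing the two retractions above yields the desired retraction of $\delta_R^{k}(\nd(P))$ onto $\nd(\mathcal{I}^{k}(P))$, completing the induction.

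I expect the only step needing genuine care to be the preservation of retracts under $\delta_R$, and within that, the point most easily overlooked is that $\delta_R(H)$ must sit inside $\delta_R(G)$ as an actual subdigraph rather than merely admitting a homomorphism into it. Once the set-theoretic description of $\delta_R$ is used to confirm this inclusion, everything else reduces to composing homomorphisms and tracking which vertices are fixed, so I anticipate no further obstacle.
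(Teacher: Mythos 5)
Your proof is correct and follows exactly the argument the paper leaves implicit: the corollary is stated without proof as an immediate consequence of the preceding lemma, and the intended justification is precisely your induction on $k$ using the fact that $\delta_R$ preserves retractions (which you correctly derive from the functoriality of $\delta_R$ already noted in the proof of the Foniok--Tardif lemma). Your care in checking that $\delta_R(H)$ sits inside $\delta_R(G)$ as an actual subdigraph is exactly the right detail to verify, and the rest composes as you say.
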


\subsection{Symmetric restrictions}
Let $[G]$ denote the symmetric restriction of a digraph $G$,
that is, the subgraph spanned by its symmetric arcs.
\begin{lemma} \label{ndtokw}
For any poset $P$, $[\nd(P)]$ retracts to a complete subgraph
with cardinality equal to the width of $P$.
\end{lemma}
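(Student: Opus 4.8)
The plan is to identify $[\nd(P)]$ concretely and then to read off the desired retraction from a Dilworth chain decomposition. First I would determine the symmetric arcs: the pair $(u,v)$ is an arc of $\nd(P)$ exactly when $u \not\geq v$, so $(u,v)$ is a symmetric arc precisely when both $u \not\geq v$ and $v \not\geq u$, that is, when $u$ and $v$ are distinct and incomparable in $P$. Hence $[\nd(P)]$ is exactly the incomparability graph of $P$: its vertices are the elements of $P$, and two of them are adjacent if and only if they are incomparable. Under this identification a complete subgraph of $[\nd(P)]$ is the same thing as an antichain of $P$, so the complete subgraphs of maximum size have cardinality equal to the width $w$ of $P$. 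I would fix one maximum antichain $A = \{a_1, \dots, a_w\}$ and aim to retract $[\nd(P)]$ onto the $w$-clique it spans.

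Second, I would recast the retraction as a chain decomposition. A homomorphism from an incomparability graph to the clique $K_w$ is precisely a partition of $P$ into $w$ colour classes, each class being a set of pairwise comparable elements, i.e.\ a chain. By Dilworth's theorem $P$ admits a partition into $w$ chains $C_1, \dots, C_w$. Since a chain is totally ordered it meets the antichain $A$ in at most one element, and since $|A| = w$ while there are exactly $w$ chains, the pigeonhole principle forces each $C_i$ to contain exactly one element of $A$; after relabelling I may assume $a_i \in C_i$.

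Finally, I would define $\rho\colon [\nd(P)] \to A$ by setting $\rho(x) = a_i$ for every $x \in C_i$. This map fixes each $a_i$, so its restriction to the clique on $A$ is the identity. It is a graph homomorphism because incomparable elements cannot lie in a common chain, hence are sent to distinct and therefore adjacent vertices of the clique. Thus $\rho$ is a retraction of $[\nd(P)]$ onto the complete subgraph on $A$, whose cardinality $w$ equals the width of $P$, as required. The only substantial ingredient is Dilworth's theorem; the remaining work is the bookkeeping that matches chains to antichain elements, which the pigeonhole argument handles automatically, so I do not anticipate any real obstacle beyond invoking Dilworth correctly.
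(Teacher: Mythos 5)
Your proposal is correct and follows essentially the same route as the paper: identify $[\nd(P)]$ as the incomparability graph, take a maximum antichain $A$, invoke Dilworth's theorem to partition $P$ into $w$ chains, and retract each chain onto its (unique, by pigeonhole) point of intersection with $A$. The only difference is that you spell out the pigeonhole step guaranteeing each chain meets $A$ exactly once, which the paper leaves implicit.
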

\begin{proof}
Any antichain in $P$ is a complete subgraph of $[\nd(P)]$.
Let $K = [A]$, where $A$ is a maximum antichain in $P$. Then
$K$ is a complete subgraph of $[\nd(P)]$ of cardinality equal 
to the width $w$ of $P$. By Dilworth's theorem, there exists a 
chain partition $\{C_1, \ldots, C_w\}$ of $P$.
Each chain $C_i$ is an independent set in $[\nd(P)]$.
Therefore, the map $\rho: [\nd(P)] \rightarrow K$ mapping each
$C_i$ to its intersection with $A$ is a retraction of $[\nd(P)]$
to $K$.
\end{proof}

To summarize the proof of Theorem~\ref{rtwz},
the existence of a $n$-colouring of $\delta^k(G)$
is equivalent to the existence of a homomorphism
of $G$ to $\delta_R^k(K_n)$ by Corollary~\ref{coltodr}.
Since $K_n = \nd(\overline{K}_n)$,
this is equivalent to
the existence of a homomorphism
of $G$ to $\nd(\mathcal{I}^k(\overline{K}_n))$
by Corollary~\ref{drtond}.
When $G$ is undirected, its edges must be
mapped to those of $[\nd(\mathcal{I}^k(\overline{K}_n))]$ 
which retracts to $K_{b(n,k)}$ by Lemma~\ref{ndtokw}.
This concludes the proof of Theorem~\ref{rtwz}.

\section{Further comments} For any poset $P$, the chromatic
number of $\nd(P)$ is equal to the number of elements in $P$,
since for any two elements of $P$ are joined by an arc in
at least one direction. In particular,  the number of elements
of $\mathcal{I}^k(\overline{K}_n)$ is equal to the maximum possible
chromatic number of a digraph $G$ with the property that
$\chi(\delta^k(G)) = n$. Note that $G = \nd(\mathcal{I}^k(\overline{K}_n))$
achieves the bound, as well as any spanning tournament in 
$\mathcal{I}^k(\overline{K}_n)$. In particular, the maximum possible
chromatic number of a digraph $G$ with the property that
$\chi(\delta^k(G)) = n$ is achieved with $G$ being a transitive tournament.
Specializing to $k = 2$, this implies that the $n$-th Dedekind number
is equal to the maximum cardinality of a transitive tournament $T$
such that $\chi(\delta^2(T)) = 2$.

\end{document}